\newtheorem{thm}{Theorem}
\newtheorem{cor}[thm]{Corollary}
\newtheorem{lem}[thm]{Lemma}
\newtheorem*{lem2}{Lemma~11.5}
\newtheorem{prob}{Problem}
\newtheorem{defn}[thm]{Definition}
\def\RRA{{\sf RRA}}
\def\wRRA{{\sf wRRA}}
\def\set#1{{\{#1\} }}
\def\Sg{{\sf Sg}}
\let\bp=\cdot
\def\con#1{\setbox13\hbox{$#1$}\ifdim\wd13<1em\breve{#1}\else{(#1)}\breve{\ }\fi}
\def\halfthinspace{\relax\ifmmode\mskip.5\thinmuskip\relax\else\kern.8888em\fi}
\let\hts=\halfthinspace
\def\rp{{\hts;\hts}}
\def\id{{1\kern-.08em\raise1.3ex\hbox{\rm,}\kern.08em}}
\def\di{{0\kern-.04em\raise1.3ex\hbox{,}\kern.04em}}
\def\A{\mathfrak{A}}
\def\L{\mathfrak{L}}
\def\F{\mathbb{F}}
\def\Ek{\mathfrak{E}^{\{2,3\}}_{k+1}}
\def\ie{{\it i.e.}}
\def\conv#1{\setbox13\hbox{$#1$}\ifdim\wd13<1.5em{#1}^{-1}\else{(#1)}^{-1}\fi}
\def\<{\left<}
\def\>{\right>}
\def\set#1{ \{ #1 \}}
\def\c#1{{\mathcal #1}}
\def\W{{\mathsf{W}}}
\def\V{{\mathsf{V}}}
\def\rup#1{{\lceil #1 \rceil}}
\def\extra#1{}      
\title{There is no finite-variable equational axiomatization of representable
relation algebras over weakly representable relation algebras}
\author{Jeremy F. Alm\\Department of Mathematics\\Illinois College\\Jacksonville, IL 62650\\
\texttt{alm.academic@gmail.com} 
\and Robin Hirsch\\Computer Science Department\\University College London\\
\texttt{r.hirsch@ucl.ac.uk}
\and Roger D. Maddux\\Department of Mathematics\\Iowa State University\\
\texttt{maddux@iastate.edu}}
\begin{document}
\maketitle
\begin{abstract}
We prove that any equational basis that defines \RRA\ over \wRRA\ must
contain infinitely many variables.  The proof uses a construction of
arbitrarily large finite weakly representable but not representable
relation algebras whose ``small'' subalgebras are representable.
\end{abstract}

\section{Introduction}
J\'onsson~\cite{J59} axiomatized the class of lattices isomorphic to
lattices of commuting equivalence relations. The operations of meet
and join in such lattices are intersection and relational composition,
respectively. Adding converse and the identity relation, J\'onsson
axiomatized the class of algebras isomorphic to algebras of binary
relations with intersection, composition, converse, and identity as
their operations.  Applied to relation algebras, J\'onsson's axioms
yield a characterization of the the class of weakly representable
relation algebras,~\ie, the class of relation algebras isomorphic with
respect to $0,\cdot,\id,\con{\,\,},\rp$ to algebras of binary
relations with set-theoretic constants and operators
$\emptyset,\cap,Id,{}^{-1},|$ (see~\cite[Definition~5.14]{HH}).

J\'onsson asked whether his axioms (which were quasi-equations) could
be replaced by equations.  P\'ecsi~\cite{Pecsi09} proved that they
can.  J\'onsson proved there is a relation algebra that is not weakly
representable, and asked whether there are weakly representable
relation algebras that are not representable. Andr\'eka~\cite{And94} 
not only provided such examples, but showed that no finite number of 
first order conditions are enough to insure that a weakly representable
algebra is representable.

Let \RRA\ denote the class of representable relation algebras, and let
\wRRA\ denote the class of weakly representable relation algebras.
Since \wRRA\ is a variety, Andr\'eka's result says that if $\Sigma$ 
is an equational basis that defines \RRA\ over \wRRA, that is,
$\RRA=\wRRA\cap\text{Mod}(\Sigma)$, then $\Sigma$ cannot be finite.
In the present paper, we strengthen Andr\'{e}ka's result in
Theorem~\ref{MainThm}:
\begin{thm}\label{MainThm}
  Suppose $\Sigma$ is a set of equations such that
  $\RRA=\wRRA\cap\mathrm{Mod}(\Sigma)$.  Then the set of variables
  used by equations in $\Sigma$ is infinite.
\end{thm}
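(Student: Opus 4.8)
The plan is to argue by contraposition. I assume that the set of variables occurring in equations of $\Sigma$ is finite, say all of them lie among $x_1,\dots,x_n$, and I derive $\RRA\neq\wRRA\cap\mathrm{Mod}(\Sigma)$, contradicting the hypothesis.

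The engine of the argument is the standard correspondence between the number of variables in an equation and the number of generators of a subalgebra. For any equation $e$ whose variables lie among $x_1,\dots,x_n$ and any algebra $\A$, I will use that
$$\A\models e \quad\Longleftrightarrow\quad \B\models e \text{ for every subalgebra } \B\le\A \text{ generated by at most } n \text{ elements.}$$
The forward direction is immediate, since every assignment into a subalgebra is also an assignment into $\A$. For the reverse direction, if $e$ fails in $\A$ under an assignment $x_i\mapsto a_i$, then it already fails in $\B=\Sg^{\A}(\{a_1,\dots,a_n\})$, because term values are computed identically in a subalgebra. Applying this to each $e\in\Sigma$, I get that $\A\models\Sigma$ holds precisely when every subalgebra of $\A$ generated by at most $n$ elements satisfies $\Sigma$.

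Next I invoke the construction announced in the abstract. Since it produces arbitrarily large finite relation algebras that are weakly representable but not representable, and whose ``small'' subalgebras are representable, I fix one member $\A$ of this family large enough that \emph{every} subalgebra generated by at most $n$ elements is representable; this is where I must check that the paper's notion of ``small'' is calibrated so that, once the ambient algebra is sufficiently large, all $n$-generated subalgebras qualify. Thus $\A\in\wRRA$, $\A\notin\RRA$, and every subalgebra of $\A$ generated by at most $n$ elements lies in $\RRA$. To conclude, I observe that since $\A\in\wRRA\setminus\RRA$ and $\RRA=\wRRA\cap\mathrm{Mod}(\Sigma)$, necessarily $\A\notin\mathrm{Mod}(\Sigma)$, so some $e\in\Sigma$ fails in $\A$. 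By the correspondence above, $e$ fails in some subalgebra $\B\le\A$ generated by at most $n$ elements. But $\B\in\RRA\subseteq\mathrm{Mod}(\Sigma)$, so $\B\models e$, a contradiction. Hence the variable set of $\Sigma$ is infinite.

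The genuinely hard part lies outside this proof: it is the construction of the family of finite weakly-representable-but-not-representable algebras with representable small subalgebras, which I take as given from the body of the paper. Within the present argument the only delicate point is matching that construction's notion of ``small'' to ``generated by at most $n$ elements'' — that is, confirming that for each $n$ the family contains an algebra all of whose $n$-generated subalgebras are representable while the whole algebra is not. Everything else is the elementary reduction between finite-variable equations and finitely generated subalgebras.
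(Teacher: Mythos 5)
Your proposal is correct and follows essentially the same route as the paper: the paper's proof also fixes $\gamma$, invokes Theorem~\ref{lem:2} to obtain $\L(p,n)\in\wRRA\setminus\RRA$ whose $\gamma$-generated subalgebras are representable, and derives the contradiction by passing a falsifying assignment for some $\varepsilon\in\Sigma$ down to the subalgebra it generates. The ``calibration'' issue you flag is exactly what Theorem~\ref{lem:2} supplies (choose a prime power $p$ with $2^\gamma<p+1$ and $n>p/2$), so there is no gap.
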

This solves a problem from the first author's dissertation \cite{Alm}.
We solve another problem from \cite{Alm} by exhibiting a non-representable 
relation algebra with a weak representation over a finite set.  In 
addition, we reduce the size of the smallest known weakly representable 
but not representable relation algebra from $2^{366}$ (from \cite{And94}) 
to $2^7$ (see Corollary~\ref{small}).

\section{Proof of Main Result}
\begin{defn}\label{1}
A \emph{relation algebra}
$\A=(A,0, 1, \overline{\phantom{x}}, +,\bp, \id, \con{\phantom{x}}, \rp)$ is a
boolean algebra $(A,0, 1,\overline{\phantom{x}}, +,\bp,)$ together with an
associative binary operation $\rp$ having identity element $\id$, \ie,
$x=x\rp\id$, and a unary operation $\con{\phantom{x}}$, satisfying
additivity: $x\rp(y+z)=x\rp y+ x\rp z$, $\con{x+y}=\con{x}+\con{y}$,
 involution laws: $\con{\con{x}}=x$, $\con{x\rp
y}=\con{y}\rp\con{x}$ and the triangle law: $\con{x}\rp\overline{x\rp y}\leq\overline{y}$.
$\A$ is \emph{symmetric} if it satisfies $\con{x}=x$, for all $x\in A$.  $\A$
is \emph{integral} if $\id$ is an atom.  
\end{defn}  If $\A$ is symmetric then $\A$ is commutative (satisfies
$x\rp{y}=y\rp{x}$), by the involution laws. We only deal with finite symmetric (hence commutative) algebras.

\begin{defn}\label{2}  Let $U$ be an equivalence relation over a set $D$.
A \emph{representation} $\theta$ of a relation algebra $\A$  with unit $U$ over base $D$ is an injective map $\theta:\A\to\mathcal{P}(U)$ sending
each $a\in\A$ to $a^\theta\;(\subseteq U)$, the image of
$a$ under $\theta$, that respects all the relation algebra operators
and constants:
\begin{align*}
0^\theta&=\emptyset\\
1^\theta&=U,
\\ \overline{x}^\theta&=U\setminus x^\theta=\{(u,v)\in U:(u,v)\notin x^\theta\},
\\(x+y)^\theta&=x^\theta\cup y^\theta=\{(u,v)\in U:(u,v)\in x^\theta\lor (u,v)\in y^\theta\},
\\(x\bp y)^\theta&=x^\theta\cap y^\theta=\{(u,v)\in U:(u,v)\in x^\theta\land (u,v)\in y^\theta\},
\\(\id)^\theta&=\{(u,u):u\in D\},
\\\con{x}^\theta&=(x^\theta)^{-1}=\{(u,v)\in U:(v,u)\in x^\theta\},
\\(x\rp y)^\theta&=x^\theta|y^\theta=\{(u,v)\in U:\exists w\in D\big((u,w)\in x^\theta\land (w,v)\in y^\theta\big)\},
\end{align*}
A \emph{weak representation}  is defined similarly, but need not respect union or
complementation.
$\RRA$ and $\wRRA$ are the  classes of relation
algebras that have representations and weak representations,
respectively.  As we mentioned earlier, they are both equational varieties.
Given a representation (or a weak representation) $\theta$ over base $D$,  any $x\in D$ and any $a\in\A$ we write
$\theta(x,a)$ for
\[\set{y\in D:(x,y)\in a^\theta}.\]
\end{defn}
If $\theta$ is any  weak representation of $\c A$ whose unit is some equivalence relation $U$ over base $D$ then for any equivalence class $X$ of $U$ the map $\phi:\c A\rightarrow \mathcal{P}(X\times X)$ defined by $a^\phi=a^\theta\cap(X\times X)$ is easily seen to respect $0, 1, \cdot, 1',  \con{\;},; $ (the unit is now $X\times X$, the base is $X$).   Further, if $\theta$ is a representation then $\phi$ also respects $\overline{\phantom{x}}, +$.
If $\c A$ is integral then it is easy to check, for non-zero $x\in\c A$, that $x;1=1;x=1$ and this ensures that $\phi$ is injective.    A representation (or weak representation) over base $D$ where the unit is $D\times D$ is called \emph{square}.  
Since all the relation algebras considered in this paper are integral, if a representation (respectively weak representation) exists then a square (weak) representation also exists. When we refer to a (weak) representation over a set $D$ the unit will be assumed to be $D^2=D\times D$.

\begin{lem}\label{3}
If $\theta$ is a weak square representation of a relation algebra $\A$ over a
set $D$, then $\theta^m$ is a weak square representation of $\A$ over $D^m$,
where, for every $m\geq1$ and every element $x$ of $\A$,
\begin{equation}\label{eq:rho m}
x^{\theta^m}=\{(u,v)\in D^m\times D^m:\forall i<m\big((u_i,v_i)\in
x^\theta\big)\}.
\end{equation}
\end{lem}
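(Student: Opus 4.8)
The plan is to verify directly that the coordinatewise-defined map $\theta^m$ satisfies every clause in the definition of a weak square representation over the base $D^m$. Since a \emph{weak} representation need not respect $+$ or complementation, it suffices to check that $\theta^m$ respects $0$, $1$, $\bp$, $\id$, $\con{\phantom{x}}$, and $\rp$, that its unit is the square $D^m\times D^m$, and that it is injective. All of these except composition reduce almost immediately to the corresponding property of $\theta$ applied in each coordinate, so I would dispatch them first and then concentrate on the composition clause.

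For the constants and the boolean-flavoured operators I would simply unwind \eqref{eq:rho m}. For $0$: since $0^\theta=\emptyset$ and $m\geq1$, the coordinate condition at $i=0$ can never hold, so $0^{\theta^m}=\emptyset$. For $1$: because $\theta$ is square, $1^\theta=D\times D$, so every pair of coordinates qualifies and $1^{\theta^m}=D^m\times D^m$, which also confirms that $\theta^m$ is square. For $\bp$: from $(x\bp y)^\theta=x^\theta\cap y^\theta$ the coordinate condition for $x\bp y$ splits into the conjunction of those for $x$ and for $y$, giving $(x\bp y)^{\theta^m}=x^{\theta^m}\cap y^{\theta^m}$. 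For $\id$: since $\id^\theta$ is the diagonal on $D$, the condition ``$(u_i,v_i)\in\id^\theta$ for all $i$'' says $u_i=v_i$ for all $i$, i.e.\ $u=v$, so $\id^{\theta^m}$ is the diagonal on $D^m$. For converse: reversing each coordinate pair shows $(u,v)\in\con{x}^{\theta^m}$ iff $(v,u)\in x^{\theta^m}$.

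The composition clause is the one interesting step, and it is where finiteness of $m$ is used. Unwinding the definitions, $(u,v)\in(x\rp y)^{\theta^m}$ means that for every $i<m$ there is a witness $w_i\in D$ with $(u_i,w_i)\in x^\theta$ and $(w_i,v_i)\in y^\theta$; whereas $(u,v)\in x^{\theta^m}\mid y^{\theta^m}$ means there is a single tuple $w\in D^m$ with $(u_i,w_i)\in x^\theta$ and $(w_i,v_i)\in y^\theta$ simultaneously for all $i$. These two are equivalent: given coordinatewise witnesses $w_0,\dots,w_{m-1}$ I would assemble the tuple $w=(w_0,\dots,w_{m-1})\in D^m$, and conversely any such $w$ supplies the required $w_i$ in each coordinate. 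This interchange of ``$\forall i<m\ \exists w_i$'' with ``$\exists w\ \forall i<m$'' is exactly a finite choice over the $m$ coordinates, and it is the only place the argument would break for infinite products.

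Finally, for injectivity I would use that $\theta$ is injective together with a diagonal embedding of $D$ into $D^m$. If $x\neq y$ then $x^\theta\neq y^\theta$, so some pair $(a,b)$ lies in exactly one of $x^\theta$, $y^\theta$; taking the constant tuples $u=(a,\dots,a)$ and $v=(b,\dots,b)$ then places $(u,v)$ in exactly one of $x^{\theta^m}$, $y^{\theta^m}$, so $x^{\theta^m}\neq y^{\theta^m}$. I expect the main obstacle to be the composition case, but since $m$ is finite the quantifier swap is unproblematic, and the remaining clauses are direct consequences of the coordinatewise definition.
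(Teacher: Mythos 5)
Your proposal is correct and follows essentially the same route as the paper's proof: a direct coordinatewise unwinding of \eqref{eq:rho m} for each operation, with the only nontrivial point being the interchange of ``$\forall i<m\,\exists w_i$'' with ``$\exists w\,\forall i<m$'' in the composition clause. You in fact check a few clauses ($0$, $1$, injectivity) that the paper's proof leaves implicit, which is harmless and slightly more complete.
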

\begin{proof}
The following calculations show that $\theta^m$ maps $\id$, $\bp$,
$\rp$, and $\con{\,\,}$ to the identity relation, intersection,
relative product, and converse, respectively, just because $\theta$
does so.
\begin{align*}
(\id)^{\theta^m}&=\{(u,v)\in(D^m)^2:\forall i<m\big((u_i,v_i)\in(\id)^\theta\big)\}
\\&=\{(u,v)\in(D^m)^2:\forall i<m\big(u_i=v_i\big)\}
\\&=\{(u,v)\in(D^m)^2:u=v\},
\\(x\bp y)^{\theta^m}&=\{(u,v)\in(D^m)^2:\forall i<m\big((u_i,v_i)\in(x\bp y)^\theta\big)\}
\\&=\{(u,v)\in(D^m)^2:\forall i<m\big((u_i,v_i)\in x^\theta\bp y^\theta\big)\}
\\&=\{(u,v)\in(D^m)^2:\forall i<m\big((u_i,v_i)\in x^\theta\land(u_i,v_i)\in y^\theta\big)\}
\\&=\{(u,v)\in(D^m)^2:\forall i<m\big((u_i,v_i)\in x^\theta\big)\land\forall i<m\big((u_i,v_i)\in y^\theta\big)\}
\\&=x^{\theta^m}\cap y^{\theta^m},
\\(x\rp y)^{\theta^m}&=\{(u,v)\in(D^m)^2:\forall i<m\big((u_i,v_i)\in(x\rp y)^\theta\big)\}
\\&=\{(u,v)\in(D^m)^2:\forall i<m\big((u_i,v_i)\in x^\theta\rp y^\theta\big)\}
\\&=\{(u,v)\in(D^m)^2:\forall i<m\,\exists w_i\in(D^m)^2\big((u_i,w_i)\in 
x^\theta\land(w_i,v_i)\in y^\theta\big)\}
\\&=\{(u,v)\in(D^m)^2:\exists w\in(D^m)^2\big((u,w)\in x^{\theta^m}\land(w,v)\in y^{\theta^m}\big)\}
\\&=x^{\theta^m}|y^{\theta^m},
\\\con{x}^{\theta^m}&=\{(u,v)\in(D^m)^2:\forall i<m\big((u_i,v_i)\in\con{x}^\theta\big)\}
\\&=\{(u,v)\in(D^m)^2:\forall i<m\big((u_i,v_i)\in(x^\theta)^{-1}\big)\}
\\&=\{(u,v)\in(D^m)^2:\forall i<m\big((v_i,u_i)\in x^\theta\big)\}
\\&=\{(u,v)\in(D^m)^2:(v,u)\in x^{\theta^m}\}
\\&=(x^{\theta^m})^{-1}.
\end{align*}%
\end{proof}

Roger Lyndon [9] associated a finite algebra $A(G)$ with every finite
projective geometry $G$ of dimension $1$ or more and order $3$ or
more.  $A(G)$ has Boolean algebra whose atoms are the points of $G$
together with a new element $\id$. Every atom (and element) is its own
converse. Relative multiplication is defined only on the atoms and
extended to all of $A(G)$ by additivity.  Lyndon proved [9,~p.23] that
$A(G)$ is a commutative symmetric integral relation algebra.  Lyndon's
proof of associativity explains the need for restricting the order to
$3$ or more (although order $2$ can be accomodated; see [9,~p.24]).
We will now define $\L(p,n)$ by adding new atoms $t_1,\cdots,t_n$
(none if $n=0$) to Lyndon's $A(G)$, where $G$ is the projective
geometry of dimension $1$ and order $p>2$, that is, $G$ is a single
line whose $p+1$ points are $a_0,\cdots,a_p$.  In a Boolean algebra
whose atoms are $\id$, $a_0,\cdots,a_p$, and, if $n>0$,
$t_1,\cdots,t_n$, let the converse of every element be itself, let
$A=a_0+\cdots+a_p$, let $T=t_1+\cdots+t_n$, and define $\rp$ on atoms
as follows: if $0\leq i,j\leq p$, $i\neq j$, $1\leq k,l\leq n$, and
$k\neq l$, then
\begin{align*}
a_i\rp a_i&=\id+a_i,\\
a_i\rp a_j&=A\bp\overline{a_i+a_j},\\
a_i\rp t_k&=T,\\
t_k\rp t_k&=\id+A,\\
t_k\rp t_l&=A.
\end{align*}
Note that $\rp$ is commutative on atoms by its definition, and
commutative on the whole algebra by additivity.
Also, for atoms $q,r,s$, it is easily checked that if $r\leq q\rp s$
then $s\leq q\rp r$.  Suppose the triangle law fails.  Then
$x\rp-(x\rp y)\cdot y\neq0$ for some elements $x,y$, so there are atoms
$q\leq x$ and $r\leq y$ with $q\rp-(x\rp y)\cdot r\neq0$, hence there is an
atom $s\leq-(x\rp y)$ such that $q\rp s\cdot r\neq0$, \ie, $r\leq q\rp s$.  But then
$0\neq s\leq q\rp r\cdot-(x\rp y)\leq x\rp y\cdot-(x\rp y)=0$, a contradiction.
Thus the triangle law holds.

With no new atoms,
$\L(p,0)$ is just Lyndon's relation algebra of the projective geometry
of dimension $1$ and order $p$.  For $n>0$, any product of elements
below $\id+A$ will be the same in both $\L(p,0)$ and $\L(p,n)$.  Since
$\L(p,0)$ is a relation algebra, associativity for $\L(p,n)$ need only
be checked in cases involving the new atoms.  The product (in any
order) of three atoms below $T$ is $T$.  For examples of mixed cases,
consider distinct atoms $t,t'\leq T$ and $a,a'\leq A$.  We have
\begin{align*}
&(a\rp a)\rp t=(\id+a)\rp t=T=a\rp T=a\rp(a\rp t),\\
&(a\rp a')\rp t=(A\bp\overline{a+a'})\rp t=T=a\rp T=a\rp(a'\rp t),\\
&(a\rp t)\rp t=T\rp t=\id+A=a\rp(\id+A)=a\rp(t\rp t),\\
&(a\rp t)\rp t'=T\rp t'=\id+A=a\rp A=a\rp(t\rp t'),
\end{align*}
and the remaining cases follow from these by commutativity.  

Suppose
$0\leq i<j\leq p$, and let
$X=\{\id\}\cup\{t_1,\dots,t_n\}\cup\{a_i+a_j\}\cup
\{a_0,\cdots,a_p\}\setminus\{a_i,a_j\}.$
The product of any two elements of $X$ is a join of elements of $X$,
since
\begin{align*}
  (a_i+a_j)\rp(a_i+a_j) &=\id+A, \\(a_i+a_j)\rp a_k
  &=A\bp\overline{a_k}\text{ for $k\neq i,j$,} \\(a_i+a_j)\rp t_l
  &=T\text{ for $1\leq l\leq n$.}
\end{align*}
Therefore $X$ is the set of atoms of a (maximal) proper subalgebra of
$\L(p,n)$, denoted $\L^{ij}(p,n)$.
\begin{lem}\label{lem:embed}
  If $0\leq i<j\leq p<q$, then $\L^{ij}(p,n)$ is isomorphic to a
  subalgebra of $\L(q,n)$.
\end{lem}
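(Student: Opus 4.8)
The plan is to exhibit an explicit embedding, defined first on atoms and then extended additively. Write $a_0,\dots,a_q,t_1,\dots,t_n$ for the diversity atoms of $\L(q,n)$, and in this paragraph let $A=a_0+\cdots+a_q$ and $T=t_1+\cdots+t_n$ denote the corresponding sums in $\L(q,n)$. Define $\varepsilon\colon\L^{ij}(p,n)\to\L(q,n)$ on atoms by
\[
\varepsilon(a_k)=a_k\quad(0\le k\le p,\ k\ne i,j),\qquad \varepsilon(a_i+a_j)=a_i+a_j+a_{p+1}+\cdots+a_q,
\]
\[
\varepsilon(t_l)=t_l\quad(1\le l\le n),\qquad \varepsilon(\id)=\id,
\]
where on the left each symbol names an atom of $\L^{ij}(p,n)$ and on the right it names the like-indexed atom of $\L(q,n)$; then extend $\varepsilon$ additively over joins of atoms. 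Because $q\ge p$, the images of the atoms of $\L^{ij}(p,n)$ are pairwise disjoint, nonzero, and join to the unit of $\L(q,n)$, so $\varepsilon$ is a Boolean embedding. Since both algebras are symmetric, $\con{\;}$ is the identity map on each and is respected trivially, and $\varepsilon(\id)=\id$ holds by construction.

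It then remains only to check that $\varepsilon$ respects $\rp$. As $\rp$ is additive in each argument and both algebras are finite and atomic, it suffices to verify $\varepsilon(x\rp y)=\varepsilon(x)\rp\varepsilon(y)$ for $x,y$ ranging over atoms, that is, for the products listed in Definition~\ref{4} together with the three displayed products governing the merged atom $a_i+a_j$. The only nonatomic image is $\varepsilon(a_i+a_j)=a_i+a_j+a_{p+1}+\cdots+a_q$, and the governing observation is that any join $s$ of two or more of the atoms $a_0,\dots,a_q$ of $\L(q,n)$ relative-multiplies exactly as $a_i+a_j$ does in $\L^{ij}(p,n)$: one has $s\rp s=\id+A$, $s\rp a_k=A\bp\overline{a_k}$ for every a-atom $a_k$ not below $s$, and $s\rp t_l=T$. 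Since $q\ge p$, the block $a_i+a_j+a_{p+1}+\cdots+a_q$ really does contain at least two atoms, so $\varepsilon(a_i+a_j)$ is such a join; moreover $\varepsilon$ sends $a_0+\cdots+a_p$ to $a_0+\cdots+a_q$, which reconciles the two occurrences of $A$ on the two sides of each equation.

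The substance of the argument, and the only step needing care, is the bundle of union computations underlying that observation: multiplying a join of two or more atoms by a single atom, by another such join, or by a $t$-atom must neither lose nor create a summand, so that the outcome agrees on the nose with $\varepsilon$ applied to the corresponding product in $\L^{ij}(p,n)$. This is precisely where the hypothesis $q\ge p$ is used — with only one leftover atom the image of $a_i+a_j$ would square to $\id+a$ instead of $\id+A$, and $\varepsilon$ would fail to be a homomorphism. Once these routine verifications are carried out for each of the finitely many atom pairs, $\varepsilon$ respects all the operations and is injective, hence embeds $\L^{ij}(p,n)$ as a subalgebra of $\L(q,n)$, as required.
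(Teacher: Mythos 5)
Your proof is correct and is essentially the paper's own argument: the paper exhibits exactly this map (fix every atom except $a_i+a_j$, which is sent to $a_i+a_j+\sum_{k=p+1}^q a_k$) and asserts that it extends additively to an embedding, while you have merely spelled out the verification, the key point being that any join of two or more $a$-atoms of $\L(q,n)$ relative-multiplies exactly as the merged atom does in $\L^{ij}(p,n)$. One small misattribution: the image of $a_i+a_j$ contains at least two atoms because $i\neq j$, not because $q\geq p$; the latter hypothesis is needed only so that the atoms $a_0,\dots,a_p$ all exist in $\L(q,n)$.
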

\begin{proof}
  The map from the atoms of $\L^{ij}(p,n)$ to $\L(q,n)$ which maps
  $a_i+a_j$ to $a_i+a_j+a_{p+1}+\cdots+a_q$ and fixes all other atoms
  extends (using additivity) to an embedding of $\L^{ij}(p,n)$ into
  $\L(q,n)$.
\end{proof}

\begin{lem}\label{lem:D}
If $\theta$ is a representation of $\L(p,n)$ over $D$ then
$$p-1=|\theta(x,a_i)|\geq 2n-1$$ for all $x\in D$ and $0\leq i\leq p$.
\end{lem}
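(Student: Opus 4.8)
The plan is to read the representation $\theta$ as a colouring of the complete graph on $D$: since $\theta$ preserves $+$, $\overline{\phantom{x}}$ and $\bp$, the images of the atoms partition the unit $D^2$, so each off-diagonal pair $(x,y)$ lies in exactly one diversity atom, its \emph{colour}. Fix $x$ and $i$ and write $S_i=\{y\in D:(x,y)\in a_i^\theta\}$ (the set denoted $\theta(x,a_i)$ in the statement), and similarly $S_j$ for the other colours. Two facts come straight from the table, using that $\theta$ preserves $\rp$ in both directions. First, $\id\le a_i\rp a_i$, so every vertex has an $a_i$-neighbour and each $S_j$ is non-empty. Second, from $a_i\rp a_i=\id+a_i$ the set $C:=S_i\cup\{x\}$ is an $a_i$-\emph{clique}: for distinct $u,u'\in S_i$ we have $(u,x),(x,u')\in a_i^\theta$, hence $(u,u')\in(a_i\rp a_i)^\theta=(\id+a_i)^\theta$, and as $u\neq u'$ this forces $(u,u')\in a_i^\theta$.

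For the equality $|S_i|=p-1$ I would fix some $j\neq i$ and a vertex $y\in S_j$, and count the colours of the edges from $y$ into $S_i$. For $u\in S_i$ we have $(y,x)\in a_j^\theta$ and $(x,u)\in a_i^\theta$, so $(y,u)\in(a_j\rp a_i)^\theta$. The key computation is that $a_j\rp a_i$ is the join of exactly the atoms $a_l$ with $l\neq i,j$, with \emph{no} $t$-summand; this is exactly what the displayed subalgebra calculation $(a_i+a_j)\rp a_k=A\bp\overline{a_k}$ records, since $(a_i+a_j)\rp a_k=(a_i\rp a_k)+(a_j\rp a_k)$. Thus every edge $(y,u)$ is coloured by some $a_l$, $l\neq i,j$. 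The map sending $u$ to the colour of $(y,u)$ is injective, for if $u,u'\in S_i$ both received colour $a_l$ they would be $a_l$-related, yet they are $a_i$-related inside the clique $C$, contradicting disjointness of distinct atoms; and it is surjective onto $\{a_l:l\neq i,j\}$ by the richness half of the same product, namely from $a_j\le a_l\rp a_i$ and $(y,x)\in a_j^\theta$ one extracts a witness $u$ with $(y,u)\in a_l^\theta$ and $(u,x)\in a_i^\theta$, i.e.\ a $u\in S_i$ of colour $a_l$. Hence $|S_i|=|\{a_l:l\neq i,j\}|=p-1$.

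For the inequality $|S_i|\ge 2n-1$ (vacuous when $n=0$) I would use the atoms $t_k$. Since $\id\le t_k\rp t_k$, the vertex $x$ has a $t$-neighbour $y$, say $(x,y)\in t_{l_0}^\theta$. Every edge from $y$ to $C$ is $t$-coloured, because for $x'\in S_i$ we have $(x',x)\in a_i^\theta$ and $(x,y)\in t_{l_0}^\theta$, giving $(x',y)\in(a_i\rp t_{l_0})^\theta=T^\theta$. For each $k$ put $F_k=\{x'\in C:(x',y)\in t_k^\theta\}$; these sets partition $C$, so $\sum_{k=1}^n|F_k|=|C|=p$. The crucial point is that each $|F_k|\ge 2$: for an arbitrary $x'\in C$ its edge to $y$ has some colour $t_l$, and applying $t_l\le a_i\rp t_k$ (valid for all $l,k$ since $a_i\rp t_k=T$) to $(x',y)\in t_l^\theta$ yields $x''$ with $(x',x'')\in a_i^\theta$ and $(x'',y)\in t_k^\theta$; here $x''\in C$ (as $C$ is the $a_i$-class of $x'$) and $x''\neq x'$, so $F_k\setminus\{x'\}\neq\emptyset$ for every $x'\in C$. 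An empty or singleton $F_k$ is therefore impossible, so $p=\sum_{k}|F_k|\ge 2n$, i.e.\ $|S_i|=p-1\ge 2n-1$.

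The main obstacle is the exact count in the middle step, establishing $|S_i|=p-1$ rather than merely bounding it. This rests entirely on the precise fact that, for $i\neq j$, the product $a_i\rp a_j$ is the join of exactly the remaining $p-1$ atoms among the $a$'s and contains no $t_k$; were a $t_k$ admitted, an edge $(y,u)$ out of $y$ could be $t$-coloured and the upper bound $|S_i|\le p-1$ would collapse. Both halves of each product are genuinely used—the inclusion half restricts the colours and gives injectivity, while its converse gives surjectivity in the equality and the doubling $|F_k|\ge 2$ in the inequality—so the argument depends throughout on $\theta$ being a full representation, preserving $\rp$ exactly in both directions.
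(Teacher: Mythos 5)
Your proof is correct. The equality half is essentially the paper's own argument: the paper fixes an $a_0$-neighbour $x'$ of $x$ and shows that sending $u\in\theta(x,a_1)$ to the colour of $(u,x')$ is a bijection onto $\{a_2,\dots,a_p\}$, using $a_0\le a_1\rp a_k$ for surjectivity and $a_1\rp a_1\cdot a_k\rp a_k=\id$ for injectivity; your $y\in S_j$ plays the role of $x'$, and your clique-plus-disjointness argument is an equivalent repackaging of the injectivity step. Your reading of $a_i\rp a_j$ (for $i\ne j$) as the join of the $a_l$ with $l\ne i,j$ and \emph{no} $t$-summand is indeed the intended one: the literal $\di\bp\overline{a_i+a_j}$ in the definition of $\L(p,n)$ must be read as $A\bp\overline{a_i+a_j}$, since the cycle law applied to $a_i\rp t_k=T$ forces $t_k\cdot(a_i\rp a_j)=0$, and the paper's own proof uses $a_1\rp a_0=a_2+\cdots+a_p$. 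The inequality half is where you genuinely diverge: the paper exhibits $2n-1$ explicit distinct elements $u_1,\dots,u_n,v_2,\dots,v_n$ of $\theta(x,a_1)$ (the $u_i$ witnessing $t_1\le a_1\rp t_i$ against a $t_1$-neighbour $x''$ of $x$, the $v_i$ witnessing $t_i\le a_1\rp t_i$), whereas you partition the $p$-element clique $C=S_i\cup\{x\}$ into the fibres $F_k$ over the $t$-colours toward a fixed $t$-neighbour and show each fibre has at least two points, obtaining $p\ge 2n$ in one stroke. Your count is cleaner and symmetric in $k$; the paper's is more explicit about which elements witness the bound. Both use the same ingredients and yield the same estimate.
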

\begin{proof} 
Suppose that $\theta$ is a representation of $\L(p,n)$ over $D$. Let
$x\in D$. Then $(x,x)\in(\id)^\theta\subseteq(a_0\rp
a_0)^\theta=a_0^\theta|a_0^\theta$ so there is some $x'\in D$ such
that $(x,x')\in a_0^\theta.$ Now $a_0\leq a_1\rp a_i$ for
$i\in\{2,\dots,p\}$, so there are distinct
$y_2,\dots,y_p\in\theta(x,a_1)$ such that  $(y_i,x')\in a_i^\theta$ for
$i\in\{2,\dots,p\}$, hence $\theta(x,a_1)\supseteq\set{y_2, \ldots, y_p}.$ Conversely, if $x\in \theta(w,a_1)$ then $(w,x')\in
a_1^\theta|a_0^\theta=a_2^\theta\cup\dots\cup a_p^\theta,$ so there is
some $j\in\{2,\dots,p\}$ such that $(w,x')\in a_j^\theta,$ hence
$w=y_j$ because
$$(w,y_j)\in(a_1^\theta|a_1^\theta)\cap(a_j^\theta|a_j^\theta)=(a_1\rp
a_1\bp a_j\rp a_j)^\theta=(\id)^\theta.$$ Therefore $\theta(x, a_1)=\set{y_2, \ldots, y_p}$ and
$|\theta(x,a_1)|=p-1.$ If $n=0$ or $n=1$ then $2n-1\leq p-1$ (since $p\geq 3$) and we are done, so assume
$n\geq2$.

Since $(x,x)\in t_1^\theta|t_1^\theta,$ there is some $x''\in D$ such
that $(x,x'')\in t_1^\theta$, as shown in the diagram below. Since $t_1\leq a_1\rp t_i$ there are distinct
$u_1,\dots,u_n\in\theta(x,a_1)$ such that $(u_i,x'')\in t_i^\theta$,
for $i\in\{1,\dots,n\}$.  Since $t_i\leq a_1\rp t_i$  there are $v_2,\dots,v_n\in D$ such that
$(u_i,v_i)\in a_1^\theta$ and $(v_i,x'')\in t_i^\theta$, for
$i\in\{2,\dots,n\}$.  Note that $v_2,\dots,v_n\in\theta(x,a_1)$ since
$a_1\rp a_1=a_1+\id$, and that $u_1,\dots,u_n,v_2,\dots,v_n$ are
distinct elements of $\theta(x,a_1)$, so $|\theta(x,a_1)|\geq2n-1.$
\[\xymatrix{
&v_i\ar@{-}[rrdd]^{t_i}&&\\
u_i\ar@{-}[ru]^{a_1}\ar@{-}[rrrd]^{t_i}&&&\\
x\ar@{-}[u]^{a_1}\ar@{-}[rrr]^{t_1}\ar@{--}[ruu]_(.6){a_1}&&&x''\\
u_1\ar@{-}[u]^{a_1}\ar@{-}[rrru]_{t_1}&&&
}\]
\end{proof}
\begin{cor}\label{notrap}
If $2n>p$ then $\L(p,n)\notin\RRA$.
\end{cor}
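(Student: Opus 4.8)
The plan is to argue by contraposition, reading off the conclusion directly from Lemma~\ref{lem:D}. Suppose, contrary to the desired conclusion, that $\L(p,n)\in\RRA$. By definition this means $\L(p,n)$ has a representation, and since $\L(p,n)$ is a finite integral (symmetric) relation algebra, the remarks preceding Lemma~\ref{3} guarantee that a \emph{square} representation $\theta$ over some base set $D$ also exists. This is the only preliminary step, and it is immediate from the integrality discussion already in the text; there is nothing to compute here.

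Next I would simply invoke Lemma~\ref{lem:D}. Applied to the representation $\theta$ (picking any $x\in D$, say with $i=0$), the lemma yields the chain
\[
p-1=|\theta(x,a_0)|\geq 2n-1.
\]
Cancelling the $-1$ on both ends gives $p\geq 2n$, that is, $2n\leq p$. This directly contradicts the standing hypothesis $2n>p$, so the assumption $\L(p,n)\in\RRA$ is untenable and we conclude $\L(p,n)\notin\RRA$.

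I do not expect any genuine obstacle in this corollary: all of the combinatorial work has already been carried out in the proof of Lemma~\ref{lem:D}, where the incompatibility between the $a_i$-pattern (which forces $|\theta(x,a_i)|=p-1$) and the $t_k$-pattern (which forces $|\theta(x,a_i)|\geq 2n-1$) is established by the explicit configuration in the diagram. The corollary is therefore just the contrapositive repackaging of that inequality, and the entire proof fits in two or three lines. The only point worth flagging is to make sure the passage to a \emph{square} representation is legitimate, but this is exactly what integrality buys us and it has already been justified in the discussion after Definition~\ref{2}.
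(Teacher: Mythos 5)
Your proof is correct and matches the paper's intent exactly: the corollary is stated without proof precisely because it is the immediate contrapositive of the inequality $p-1=|\theta(x,a_i)|\geq 2n-1$ from Lemma~\ref{lem:D}, which is all you use. Your care about passing to a square representation is a reasonable (and correctly justified) bit of bookkeeping that the paper leaves implicit via its stated convention that representations over a set $D$ have unit $D\times D$.
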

\begin{lem}\label{rep0}
If $p$ is a prime power then $\L(p,0)$ has a representation over a set
of size $p^2$ and $\L(p,1)$ has a representation over a set of size
$2p^2$.
\end{lem}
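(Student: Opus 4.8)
The plan is to realize $\L(p,0)$ inside the affine plane coordinatized by the field $\F$ of order $p$ — available precisely because $p$ is a prime power — and then to build $\L(p,1)$ from two disjoint copies of that representation glued together by the extra atom $t$. First I would take $D=\F\times\F$, so $|D|=p^2$. For a nonzero $d\in\F^2$ call the line $\F d$ its \emph{direction}; there are exactly $p+1$ directions, and I fix one nonzero representative $d_0,\dots,d_p$, one per direction. For distinct $u,v\in D$ the nonzero vector $v-u$ lies in a unique $\F d_i$, and $\F(v-u)=\F(u-v)$, so the direction of the unordered pair is well defined and I may set
\[
a_i^\theta=\{(u,v)\in D\times D:u\neq v,\ v-u\in\F d_i\},\qquad (\id)^\theta=\{(u,u):u\in D\}.
\]
These partition $D\times D$ into the diagonal together with the $p+1$ symmetric classes $a_0^\theta,\dots,a_p^\theta$, matching the atoms $\id,a_0,\dots,a_p$ and the condition $\con{a_i}=a_i$.

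Next I would verify the two multiplication rules by linear algebra over $\F$. If $(u,v),(v,w)\in a_i^\theta$ then $v-u$ and $w-v$ both lie in $\F d_i$, hence so does $w-u=(v-u)+(w-v)$; thus $w=u$ or $(u,w)\in a_i^\theta$, and both cases occur since each direction line has $p\geq3$ points, giving $a_i\rp a_i=\id+a_i$. If $(u,v)\in a_i^\theta$ and $(v,w)\in a_j^\theta$ with $i\neq j$, then $v-u=\alpha d_i$ and $w-v=\beta d_j$ with $\alpha,\beta\neq0$ and $\{d_i,d_j\}$ independent, so $w-u=\alpha d_i+\beta d_j$ is nonzero of direction $k\neq i,j$; this gives $a_i\rp a_j\leq\sum_{k\neq i,j}a_k$. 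For the reverse (the density or ``witness'' half), given $(u,w)\in a_k^\theta$ with $k\neq i,j$ I would solve $\alpha d_i-\gamma d_j=w-u$ uniquely (as $\{d_i,d_j\}$ is a basis), observe $\alpha,\gamma\neq0$ because $k\neq j$ and $k\neq i$, and take $v=u+\alpha d_i$, so that $(u,v)\in a_i^\theta$ and $(v,w)\in a_j^\theta$. These are exactly the conditions — no forbidden triangles, plus a witness for every allowed triangle — that make a complete atom-coloring a genuine representation of a finite integral symmetric relation algebra, so $\theta$ represents $\L(p,0)$ over $p^2$ points.

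Finally, for $\L(p,1)$ I would take $D'=(D\times\set{0})\cup(D\times\set{1})$, two disjoint copies of the plane, so $|D'|=2p^2$; I color each pair lying in a single copy by the affine rule above, color every pair of points from different copies by $t$, and color the diagonal by $\id$. The affine identities are unaffected, since same-copy products stay within a copy. For the new atom one checks $t\rp t=\id+A$ — a pair lies in $t\rp t$ iff a detour through the opposite copy exists, which happens exactly for same-copy pairs, and $(\id+A)^\theta$ is precisely the within-copy part of $1$ — and $a_i\rp t=t$, since stepping within a copy along direction $i$ (possible as each direction line has at least two points) and then crossing over lands in the opposite copy, and every cross pair arises this way. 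This yields a representation of $\L(p,1)$ over $2p^2$ points. I expect the main obstacle to be the reverse (witness) inclusions rather than the triangle-consistency inclusions: these are what genuinely require the geometry, and hence the existence of a field — equivalently an affine plane — of order $p$, which is exactly where the hypothesis that $p$ is a prime power enters.
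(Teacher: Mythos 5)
Your construction is correct and is essentially the paper's own: the paper likewise represents $\L(p,0)$ on the affine plane $\F_p^2$ (following Lyndon, with the diversity atoms realized as the $p+1$ parallel classes, i.e.\ the slopes $0,\dots,p-1$ together with the vertical direction, which is exactly your parametrization by one-dimensional subspaces $\F d_i$), and it obtains $\L(p,1)$ by doubling the base to $D\cup D'$ and labelling every cross-copy pair with $t_1$. The only difference is one of detail: you carry out the triangle/witness verifications explicitly, whereas the paper delegates the $\L(p,0)$ part to Lyndon's theorem and states the $\L(p,1)$ part without verification.
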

\begin{proof} The first part was proved in \cite[Theorem~1]{Ly61}, along the following lines.
Let $\F_p$ be the finite field of cardinality $p$. Let $D=\F^2_p$. $D$
is the affine plane with $p$ points on each line. Define some
relations on $D$ as follows. If $0\leq i<p$, $R_i$ is the set of pairs
of distinct points that lie on lines with slope $i$, while $R_p$ is
the set of pairs of distinct points that lie on a ``vertical'' line
(with ``infinite slope'').
\begin{align*}
R_i&=\{(x,y):x,y\in D,\,y-x\in\{(j,ij):0<j\in D\}\}, \text{ for $0\leq i<p$,}
\\R_p&=\{(x,y):x,y\in D,\,y-x\in\{(0,j):0<j\in D\}\}.
\end{align*}
Define a map $\phi:\L(p, 0)\rightarrow \mathcal{P}(D^2)$ by letting $(\id)^\phi$ be the identity over $D$, \/ $a_i^\phi = R_i$ (for $0\leq i\leq p$) and
extend $\phi$ by additivity to arbitrary elements of $\L(p,0)$.  Then $\phi$ is a
representation of $\L(p,0)$ on $D$.  Let ${}':D\to D'$ be a
bijection from $D$ to some disjoint set $D'$ and let $\theta$ be
defined on atoms of $\L(p,1)$ by
\begin{align*}
(\id)^\theta&=\{(x,x):x\in D\cup D'\},
\\a_i^\theta&=R_i\cup\{(x',y'):(x,y)\in R_i\},\text{ for $0\leq i\leq p$,}
\\t_1^\theta&=(D\times D')\cup(D'\times D).
\end{align*}
Extend $\theta$ by additivity to all of $\L(p,1)$.  Then $\theta$ is a
representation of $\L(p,1)$ over $D\cup D'$.
\end{proof}
\begin{cor}\label{cor}
If $p$ is a prime power then $\L(p,0)$ has weak representations over
finite sets of size $p^{2m}$ for all $m\geq1$.
\end{cor}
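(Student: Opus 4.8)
The plan is to combine Lemma~\ref{rep0} with the power construction of Lemma~\ref{3}. First I would invoke Lemma~\ref{rep0}: since $p$ is a prime power, $\L(p,0)$ has a representation $\phi$ over the set $D=\F_p^2$, which has cardinality $p^2$. Because every representation is in particular a weak representation, and because the standing convention of the paper is that a representation over a set $D$ is square (unit $D\times D$), the map $\phi$ is a weak \emph{square} representation of $\L(p,0)$ over $D$.

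The second step is simply to feed $\phi$ into Lemma~\ref{3}. Taking $\theta=\phi$ and any fixed $m\geq1$, that lemma produces a weak square representation $\phi^m$ of $\L(p,0)$ over the base $D^m$. Since $|D|=p^2$, the base $D^m$ has cardinality $(p^2)^m=p^{2m}$, and so we obtain a weak representation over a finite set of size $p^{2m}$ for every $m\geq1$, exactly as required.

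I do not expect any genuine obstacle here, since the content is entirely contained in the two cited results; the only points needing a moment's care are, first, the observation that a genuine representation is automatically a weak representation, so that Lemma~\ref{3} (stated for weak square representations) applies, and second, the elementary arithmetic of the base size, $(p^2)^m=p^{2m}$. It is worth remarking that this arithmetic is precisely what dictates the exponent in the statement: the corollary asserts sizes of the form $p^{2m}$ rather than arbitrary finite sizes because the seed representation lives on $p^2$ points and the power construction multiplies the exponent by $m$.
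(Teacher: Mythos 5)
Your proof is correct and is exactly the paper's intended argument: the corollary follows by applying the power construction of Lemma~\ref{3} to the size-$p^2$ representation of $\L(p,0)$ from Lemma~\ref{rep0}, as the paper itself does explicitly at the start of the proof of Theorem~\ref{weakrep}. No issues.
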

Let $\theta$ be any weak representation of $\L(p,0)$ over a (possibly
very large) finite base $D$.  Again, let ${}':D\to D'$ be a bijection
from $D$ to some disjoint set $D'$ and let $\theta'$ be the weak
representation of $\L(p,0)$ over $D'$ defined by $(x',y')\in
b^{\theta'}\iff(x,y)\in b^\theta$ (for any $x,y\in D$, $b\in
\L(p,0)$). Next we define a `randomly labelled' $\L(p,n)$ structure
$\xi=\xi(\theta)$ over base $D\cup D'$, as follows.  Partition
$D\times D'$ into $n$ pieces $T_1,\cdots,T_n$ randomly, \ie, each pair
$(x, y')\in D\times D'$ is included in exactly one of the $T_i$ (some
$1\leq i\leq n$) with equal probabilities $\frac 1n$ each, and the
probabilities for distinct edges are independent.  For $b\in \L(p,n)$
let \[b^\xi=(b\cdot (A+\id))^\theta\cup (b\cdot
(A+\id))^{\theta'}\cup\bigcup_{t_i\leq b}(T_i\cup\conv{T_i}).\]

\begin{lem}\label{lem:prob}
Assume $\theta$ is a weak representation of $\L(p,0)$ over a base $D$.
Let $d=|D|$ and $k\leq|\theta(a_i,x)|$ for all $x\in D$, $0\leq i\leq
p$.  Provided
\begin{align}
\label{ineq1}\left(\frac{n^2}{n^2-1}\right)^d&>4n^2d(d-1)\quad\text{and}
\\\label{ineq2}\left(\frac{n}{n-1}\right)^k&>4(p+1)nd^2,
\end{align}
the probability that the random structure $\xi$ is a weak
representation of $\L(p,n)$ is strictly positive.
\end{lem}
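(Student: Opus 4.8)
The plan is to treat $\xi$ as the additive extension of the atom assignment $\id\mapsto\{(u,u):u\in D\cup D'\}$, $a_i\mapsto a_i^\theta\cup a_i^{\theta'}$, $t_k\mapsto T_k\cup\conv{T_k}$, and to show that with positive probability this map satisfies every defining clause of a weak representation: injectivity, and respect for $0,1,\bp,\id$, converse and $\rp$. The key structural observation is that the atom-images occupy three disjoint ``blocks'' of $(D\cup D')^2$---the diagonal ($\id^\xi$), the within-part $(D\times D)\cup(D'\times D')$ (the $a_i^\xi$), and the cross-part $(D\times D')\cup(D'\times D)$ (the $t_k^\xi$). Because $\theta,\theta'$ are weak representations of $\L(p,0)$ their atom-images are already pairwise disjoint, symmetric and exhaust each square, and the $T_k$ partition $D\times D'$; so for every outcome the atom-images of $\xi$ are pairwise disjoint and symmetric, $0^\xi=\emptyset$ and $1^\xi=(D\cup D')^2$. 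Additivity plus disjointness then gives respect for $\bp$, the diagonal gives $\id$, and injectivity follows once each $t_k^\xi$ is nonempty (guaranteed by the coverage conditions below). Thus the entire content of the lemma is that $\xi$ respects $\rp$, which by additivity need only be checked on pairs of atoms.

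First I would dispose of the deterministic cases. Since in $\L(p,n)$ the products $\id\rp a_i$, $a_i\rp a_i$ and $a_i\rp a_j$ ($i\neq j$) involve only $\id$ and the atoms $a_0,\dots,a_p$ and coincide with the corresponding products of $\L(p,0)$, and since the images $\id^\xi,a_i^\xi$ lie in the within-block where $\xi$ restricts to $\theta$ on $D$ and to $\theta'$ on $D'$, these compositions come out correctly for every outcome. The same block bookkeeping settles all the ``forward'' inclusions for the products that do involve a $t_k$: a within-edge followed by a cross-edge is a cross-edge, so $a_i^\xi\,|\,t_k^\xi\subseteq T^\xi$; a cross-edge followed by a cross-edge is a within-edge, so $t_k^\xi\,|\,t_k^\xi$ and $t_k^\xi\,|\,t_l^\xi$ land in the within-block; and disjointness of $T_k,T_l$ keeps the diagonal out of $t_k^\xi\,|\,t_l^\xi$, matching $t_k\rp t_l=A$. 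Consequently the only clauses that can fail are the ``backward'' coverage inclusions for $a_i\rp t_k=T$, $t_k\rp t_k=\id+A$ and $t_k\rp t_l=A$; this is where the randomness enters and where I would spend the effort.

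Next I would bound the probability that a required witness is missing, using that distinct edges receive independent labels. For $a_i\rp t_k=T$ a cross pair $(x,y')\in D\times D'$ is uncovered exactly when no $v\in\theta(a_i,x)$ has $(v,y')\in T_k$; the $|\theta(a_i,x)|\ge k$ relevant edges are independent and each misses $T_k$ with probability $1-\tfrac1n$, so the failure probability is at most $\bigl(\tfrac{n-1}{n}\bigr)^{k}$. For $t_k\rp t_l=A$ and the off-diagonal part of $t_k\rp t_k=\id+A$, a same-side pair $x\neq w$ is uncovered when no opposite-side $v$ has both $(x,v)\in T_k$ and $(w,v)\in T_l$; as $x\neq w$ the two edges are distinct and independent, both land correctly with probability $\tfrac1{n^2}$, and the $d$ choices of $v$ are independent, giving failure probability at most $\bigl(\tfrac{n^2-1}{n^2}\bigr)^{d}$. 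The diagonal part of $t_k\rp t_k$ merely asks each point to have some $T_k$-neighbour across, which fails with probability at most $\bigl(\tfrac{n-1}{n}\bigr)^{d}\le\bigl(\tfrac{n-1}{n}\bigr)^{k}$ since $k\le|\theta(a_i,x)|\le d$.

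Finally I would finish by a union bound. The $a_i\rp t_k$ requirements number at most $2(p+1)nd^2$ (the factor $2$ for the two cross-parts), so by~\eqref{ineq2} their combined failure probability is below $\tfrac12$; the common-neighbour requirements number only $n^2d(d-1)$ once the converse-symmetric events ``$(x,w,k,l)$'' and ``$(w,x,l,k)$'' are identified, so by~\eqref{ineq1} their combined probability is below $\tfrac14$; and the diagonal terms contribute a lower-order amount absorbed by the remaining slack in the constant $4$. Hence the probability that some witness is missing is strictly less than $1$, and with positive probability $\xi$ respects $\rp$, making it a weak representation of $\L(p,n)$. I expect the genuine obstacle to be exactly this final calculus of constants: correctly separating the automatic atom-products from the probabilistic ones, and then arranging the counts---together with the converse identification that halves the common-neighbour family and the observation $k\le d$ that tames the diagonal---so that the two union-bound sums fall under the thresholds~\eqref{ineq1} and~\eqref{ineq2}.
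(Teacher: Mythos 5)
Your proposal is correct and follows essentially the same route as the paper: a union bound over the missing-witness events for the products $a_i\rp t_k$ and $t_j\rp t_k$, with per-event failure probabilities $\left(\frac{n-1}{n}\right)^k$ and $\left(\frac{n^2-1}{n^2}\right)^d$ controlled respectively by \eqref{ineq2} and \eqref{ineq1}. Your extra bookkeeping (explicit verification of the deterministic block structure, the converse identification halving one count, and the separate treatment of the diagonal of $t_k\rp t_k$ --- which in fact already follows from the off-diagonal case) only makes explicit what the paper leaves implicit.
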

\begin{proof}
For any distinct $x,y\in D$, any $z'\in D'$, and any $1\leq i,j\leq
n$ the probability that $(x,z')\in T_i$ and $(y,z')\in T_j$ is
$\frac1{n^2}$.  Hence, for any distinct $x,y\in D$ and any $1\leq i,
j\leq n$ the probability that there is no $z'\in D'$ such that $(x,
z')\in T_i$ and $(y,z')\in T_j$ is $(\frac{n^2-1}{n^2})^d$.  Thus the
probability that there is a distinct pair $x,y\in D$ and some $1\leq
i,j\leq n$ such that there is no $z'\in D'$ witnessing the product
$t_i\rp t_j$ is at most $d(d-1)n^2\left(\frac{n^2-1}{n^2}\right)^d$.
Similarly, for $x\in D$, $y'\in D'$, $0\leq q\leq p$, and $1\leq i\leq
n$, the probability that there is no $z\in D$ such that $(x,z)\in
a_q^\rho$ and $(z,y')\in T_i$ is
$\left(\frac{n-1}n\right)^{|\theta(a_q,x)|}<\left(\frac{n-1}n\right)^k$.
Hence the probability
that $\xi$ fails to be a weak representation is less
than\[2d(d-1)n^2\left(\frac{n^2-1}{n^2}\right)^d+
2(p+1)d^2n\left(\frac{n-1}{n}\right)^k.\]\eqref{ineq1}
and~\eqref{ineq2} ensure that this probability is strictly less than
$\frac12+\frac12$, hence the probability that $\xi$ is a weak
representation is strictly positive.
\end{proof}

\extra{
\begin{lem2}
If $x>(4a)^2$ and $x>e^{\frac b a}$ then $x>a\log_e(x)+b$.
\end{lem2}
\begin{proof}
The first condition is equivalent to $\log_e(x)>2\log_e(4a)$. Recall
that $e^y>y$ for all real $y$.
\begin{align*}
e^{\log_e(x)-\log_e(4a)} &>\log_e(x)-\log_e(4a)\\
\intertext{so}
x&>4a(\log_e(x)-\log_e(4a))\\
&>4a\frac{\log_e(x)}2\;\;(\mbox{by first condition})\\
&=2a\log_e(x)\\
&>a\log_e(x)+b\;\;\;(\mbox{by second condition})
\end{align*}
\end{proof}
}

\begin{thm}\label{weakrep}
If $p\geq3$ is a prime power and $1\leq n$, then $\L(p,n)$ is weakly
representable over  arbitrarily large finite sets.
\end{thm}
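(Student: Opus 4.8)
The plan is to treat the trivial case $n=1$ by itself and then feed the weak representations of $\L(p,0)$ from Corollary~\ref{cor} into the probabilistic construction preceding Lemma~\ref{lem:prob} for the case $n\geq2$, checking that the hypotheses~\eqref{ineq1} and~\eqref{ineq2} of that lemma hold once the base set is large enough. When $n=1$ there is nothing probabilistic to do: Lemma~\ref{rep0} already gives a representation $\theta$ of $\L(p,1)$ over a set of size $2p^2$, and applying Lemma~\ref{3} to it yields weak representations $\theta^m$ of $\L(p,1)$ over sets of size $(2p^2)^m$, which are arbitrarily large. So assume $n\geq2$.

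For each $m\geq1$, Corollary~\ref{cor} gives a weak representation $\theta$ of $\L(p,0)$ over a base $D$ of size $d=p^{2m}$; concretely $\theta$ is the power $\phi^m$ of Lemma~\ref{3} built from the affine-plane representation $\phi$ of Lemma~\ref{rep0}. I first pin down the parameter $k$ of Lemma~\ref{lem:prob}. In $\phi$ over $\F_p^2$ each point $x$ sees exactly the $p-1$ other points on the line through $x$ of each fixed slope, so $|\phi(a_i,x)|=p-1$ for every $x$ and every $0\leq i\leq p$; passing to $\phi^m$ via~\eqref{eq:rho m} multiplies these counts coordinatewise, so $|\theta(a_i,x)|=(p-1)^m$ for every point $x$ of $D$ and every $i$. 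Hence I may take $k=(p-1)^m$ and $d=p^{2m}$ in Lemma~\ref{lem:prob}.

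Next I would verify~\eqref{ineq1} and~\eqref{ineq2} for all sufficiently large $m$. Taking logarithms turns each into a statement of the shape $x>a\log_e x+b$ with $a,b>0$ fixed constants, which is exactly what Lemma~11.5 resolves. For~\eqref{ineq1} set $x=d$: the left side contributes the linear term $d\log_e\frac{n^2}{n^2-1}$ while $\log_e\!\big(4n^2d(d-1)\big)=O(\log_e d)$. For~\eqref{ineq2} set $x=k=(p-1)^m$, whose logarithm is $m\log_e(p-1)$, and compare against $\log_e\!\big(4(p+1)nd^2\big)=O(m)$, using $\log_e d=2m\log_e p$. In both cases the coefficient of the linear term is a fixed positive constant (here $n\geq2$ guarantees $\log_e\frac{n}{n-1}>0$ and $\log_e\frac{n^2}{n^2-1}>0$), so Lemma~11.5 applies once $m$ is large enough.

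The whole force of the argument sits in this asymptotic comparison, so I would state it plainly: since $p\geq3$ forces $p-1\geq2$, the exponent $k=(p-1)^m$ grows exponentially in $m$, whence the left sides $\left(\frac{n^2}{n^2-1}\right)^d$ and $\left(\frac{n}{n-1}\right)^k$ are doubly exponential in $m$, while the right sides $4n^2d(d-1)$ and $4(p+1)nd^2$ are merely polynomial in $d=p^{2m}$ and hence only singly exponential in $m$. Thus for all large $m$ the left sides dominate, the hypotheses of Lemma~\ref{lem:prob} are met, and with positive probability $\xi$ is a weak representation of $\L(p,n)$ over $D\cup D'$, a set of size $2d=2p^{2m}$. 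Letting $m\to\infty$ produces weak representations of $\L(p,n)$ over arbitrarily large finite sets.
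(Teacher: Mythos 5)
Your proposal is correct and follows essentially the same route as the paper: power up the Lyndon representation of $\L(p,0)$ to obtain weak representations over bases of size $d=p^{2m}$ with $k=(p-1)^m$, then observe that the left-hand sides of \eqref{ineq1} and \eqref{ineq2} grow doubly exponentially in $m$ while the right-hand sides grow only singly exponentially, so Lemma~\ref{lem:prob} applies for all sufficiently large $m$. Your separate treatment of $n=1$ via Lemma~\ref{rep0} and Lemma~\ref{3} is a sensible refinement that the paper glosses over, since the ratios $\frac{n^2}{n^2-1}$ and $\frac{n}{n-1}$ appearing in the hypotheses of Lemma~\ref{lem:prob} are undefined when $n=1$.
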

\begin{proof}
Let $\theta^m$ be the weak representation of $\L(p,0)$ given in
\eqref{eq:rho m} with base $D=(\mathbb{F}^2_p)^m$, $|D|=p^{2m}$, and
note, for all $x\in D$ and all diversity atoms $a$ of $\L(p,0)$, that
$|\theta(a,x)|=(p-1)^m$.  Observe, in \eqref{ineq1} and
\eqref{ineq2}, that $d=p^{2m}$ and $k=(p-1)^m$ and that the left hand
side of each inequality is governed by a double exponential function
of $m$ whereas the right hand side is governed by only a single
exponential function of $m$.  Hence it is already clear that for
sufficiently large $m$ both inequalities are satisfied.  For such $m$,
there is strictly positive probability that the random structure
$\xi(\theta^m)$ is a weak representation on a base of size $2p^{2m}$ (Lemma~\ref{lem:prob}), hence
a weak representation $\xi$ exists within this probability
space. Routine computation (see the appendix, Lemma~\ref{lem:appendix}) shows that \eqref{ineq1} holds provided
$m>\log_p(16n^2)$ and \eqref{ineq2} holds provided $m>2\log_{p-1}(24n)$ and $m>\frac13\log_{p-1}(4n(p+1))$. 
\end{proof}
For example, by Theorem~\ref{weakrep} and Corollary~\ref{notrap}, we have the  smallest known weakly representable but not representable relation algebra:
\begin{cor}\label{small}
$\L(3,2)$ is a non-representable relation algebra that is weakly 
representable over a finite set.
\end{cor}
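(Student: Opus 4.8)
The plan is to obtain this corollary as a direct specialization of the two preceding results to the smallest admissible parameters, namely $p=3$ and $n=2$. There is essentially no work beyond checking that the numerical hypotheses hold, so I would organize the argument as two short verifications.

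First I would establish weak representability. Since $3$ is prime it is a prime power, and $n=2\geq 1$, so the hypotheses of Theorem~\ref{weakrep} are met. That theorem then yields that $\L(3,2)$ is weakly representable over arbitrarily large finite sets; in particular it has a weak representation over some finite base, so $\L(3,2)\in\wRRA$. Second I would establish non-representability: here $2n=4>3=p$, so Corollary~\ref{notrap} applies and gives $\L(3,2)\notin\RRA$. Combining the two verifications yields the claim.

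I do not expect any genuine obstacle, since the corollary is an immediate consequence of the machinery already in place. The only substantive input hidden in Theorem~\ref{weakrep} is that the field $\F_3$ exists, used in Lemma~\ref{rep0} to represent $\L(3,0)$, together with the fact that the probabilistic inequalities of Lemma~\ref{lem:prob} can be satisfied for large $m$; everything visible at the level of this corollary reduces to the arithmetic fact $2\cdot 2>3$. It is perhaps worth remarking that $\L(3,2)$ has the $7$ atoms $\id,a_0,a_1,a_2,a_3,t_1,t_2$, hence $2^7$ elements, which is the source of the bound advertised in the abstract.
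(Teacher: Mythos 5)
Your proposal is correct and matches the paper exactly: the corollary is obtained by citing Theorem~\ref{weakrep} (with $p=3$ a prime power, $n=2\geq 1$) for weak representability and Corollary~\ref{notrap} (since $2n=4>3=p$) for non-representability. Your added remark about the $2^7$ elements is also consistent with the abstract's claim.
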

\begin{thm}\label{rep}
If $p$ is a prime power and $p$ is large compared to $n\geq 1$, then
$\L(p,n)$ is representable over a finite set of size $2p^2$.
\end{thm}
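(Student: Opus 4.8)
The plan is to apply the random construction $\xi(\theta)$ together with Lemma~\ref{lem:prob}, but with $\theta$ taken to be a \emph{genuine} representation of $\L(p,0)$ rather than a merely weak one, and then to argue that this choice automatically promotes $\xi(\theta)$ from a weak representation to a full one. Concretely, I would let $\theta$ be the affine-plane representation of $\L(p,0)$ over $D=\F_p^2$ from Lemma~\ref{rep0}; this $\theta$ respects $+$ and $\overline{\phantom{x}}$, not just $\bp,\id,\con{\phantom{x}},\rp$. For it we have $|D|=p^2$ and $|\theta(a_i,x)|=p-1$ for every $x\in D$ and every $0\le i\le p$, since $\theta(a_i,x)$ consists of the $p-1$ other points on the line through $x$ with the slope coding $a_i$. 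Thus in Lemma~\ref{lem:prob} we have $d=p^2$ and $k=p-1$, and $\xi=\xi(\theta)$ lives over the $2p^2$-element set $D\cup D'$.

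Next I would check that \eqref{ineq1} and \eqref{ineq2} hold once $p$ is large compared to $n$. With $d=p^2$ and $k=p-1$ the left-hand sides grow exponentially in $p$ (like $e^{p^2/n^2}$ and $e^{p/n}$) while the right-hand sides are only polynomial in $p$, so both inequalities hold as soon as $p/\log_e p$ exceeds a suitable constant multiple of $n$, the binding constraint being \eqref{ineq2}. This is the exponential-versus-polynomial comparison underlying the proof of Theorem~\ref{weakrep} (with the affine representation now in place of $\theta^m$) and can be made quantitative through Lemma~11.5. Lemma~\ref{lem:prob} then gives strictly positive probability that $\xi$ is a weak representation of $\L(p,n)$, so such a weak representation over a base of size $2p^2$ exists.

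The decisive step is to upgrade this weak representation to a genuine one, and this is where using a full base representation $\theta$ is essential. Because $\theta$ respects $+$, the defining formula for $b^\xi$ is additive in $b$, so $\xi$ respects $+$. The $\xi$-images of the atoms of $\L(p,n)$ partition $(D\cup D')^2$: within each copy the images of $a_0,\dots,a_p$ together with the diagonal exhaust $D^2$ and $D'^2$, because $\theta$ is a representation of $\L(p,0)$ and hence $1^\theta=D^2$; the classes $T_1,\dots,T_n$ together with their converses exhaust the cross pairs; and distinct atoms receive disjoint images because $\xi$, being a weak representation, respects $\bp$. A weak representation whose atom-images partition the unit and which respects $+$ automatically respects $\overline{\phantom{x}}$ and $1$, and is therefore a representation. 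Hence $\xi$ is a square representation of $\L(p,n)$ over a set of size $2p^2$.

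I expect the heart of the matter to be this upgrade together with the bookkeeping that justifies it. The point is that the inclusions $\subseteq$ in the products $a_i\rp t_k=T$, $t_k\rp t_k=\id+A$ and $t_k\rp t_l=A$ hold deterministically, independently of the random labelling, simply because each $a_i$ stays inside a single copy while each $t_k$ crosses between the two copies; the probabilistic argument of Lemma~\ref{lem:prob} is needed only to supply witnesses for the reverse inclusions, which are exactly the events governed by \eqref{ineq1} and \eqref{ineq2}. Granting this, the combination of a full base representation with a partition of the unit is precisely what distinguishes the representability proved here from the mere weak representability of Theorem~\ref{weakrep}.
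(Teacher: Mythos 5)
Your proposal is correct and follows essentially the same route as the paper: take the genuine affine-plane representation $\theta$ of $\L(p,0)$ from Lemma~\ref{rep0} (so $d=p^2$, $k=p-1$), verify \eqref{ineq1} and \eqref{ineq2} for $p$ large relative to $n$, obtain a weak representation $\xi(\theta)$ over $D\cup D'$ via Lemma~\ref{lem:prob}, and then observe that since $\theta$ is a full representation and every cross edge is labelled by an atom below $T$, the atom-images partition the unit, so $\xi$ respects complement and is a representation. The only detail the paper adds is to dispose of $n=1$ separately via Lemma~\ref{rep0}, since for $n=1$ the quantities $\frac{n}{n-1}$ and $\frac{n^2}{n^2-1}$ in \eqref{ineq1} and \eqref{ineq2} are undefined.
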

\begin{proof}
The case $n=1$ is covered by Lemma~\ref{rep0}, so assume $n\geq 2$.
By Lemma~\ref{rep0}, let $\theta$ be a representation of $\L(p,0)$
over a set $D$, where $|D|=p^2$ and $|\theta(u,a_i)|=p-1$.  If $p$ is
sufficiently large compared to $n$ so that \eqref{ineq1} and
\eqref{ineq2} hold then by Lemma~\ref{lem:prob} there is a strictly
positive probability that the random structure $\xi(\theta)$ is a weak
representation, hence a weak representation $\xi$ of this form exists.
Elementary calculations 
show that $p>16n^2$ ensures \eqref{ineq1}
holds and $p>1+(48n)^2$ ensures \eqref{ineq2} holds. 
\extra{
If $p>16n^2$ then $p^2>2n=e^{\frac{\log(4n^2)}{2}}$.  So by Lemma~11.5 with $a=4n^2,\; b=2n^2\log(4n^2)$,
\begin{align*}
p^2>2n^2\log(4n^2)+ 4n^2\log(p^2)&\Leftrightarrow \frac{p^2}{2n^2}>\log(4n^2)+2\log(p^2)\\
&\Rightarrow p^2\log(\frac{n^2}{n^2-1})>\log(4n^2)+2\log(p^2)&&\mbox{by }\eqref{eq:alpha}\\
&\Rightarrow (\frac{n^2}{n^2-1})^{p^2}>4n^2p^2(p^2-1)
\end{align*}
yielding \eqref{ineq1}.
\\
Now suppose $p>(48n)^2+1$.  This is more than enough to get $p-1>e^{\frac{\log(4n)}{6}}=\;^6\!\sqrt{4n}$.  So by Lemma~11.5 with $a=12n,\; b=2n\log(4n)$ we get $(p-1)>2n\log(4n)+12n\log(p-1)$.  Hence
\begin{align*}
\frac{p-1}{2n}>\log(4n)+6\log(p-1)&\Rightarrow (p-1)\log(\frac{n}{n-1})>\log(4n)+4\log(p+1)&&p\geq 16\\
&\Rightarrow (\frac{n}{n-1})^{p-1}>4n(p+1)^5>4n(p+1)^4p
\end{align*}
yielding \eqref{ineq2}.\\
}
Since $\theta$
is a representation (not just a weak one) and since each edge from
$(D\times D')$ is labelled by an atom below $T$, it follows that $\xi$
respects complement and is therefore a representation of $\L(p,n)$.
\end{proof}

\begin{thm}\label{lem:2} 
  For every finite $\gamma$ there exist $p$ and $n$ such that
  $\L(p,n)\in\wRRA\setminus\RRA$ and all the $\gamma$-generated
  subalgebras of $\L(p,n)$ are representable over finite sets.
\end{thm}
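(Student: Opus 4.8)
The plan is to produce, for each finite $\gamma$, a single algebra $\L(p,n)$ that is weakly representable but not representable, while every subalgebra generated by $\gamma$ elements is representable over a finite set. First I would fix the non-representability: by Corollary~\ref{notrap} it suffices to choose $p,n$ with $2n>p$, and by Theorem~\ref{weakrep} the algebra $\L(p,n)$ is weakly representable over a finite set whenever $p\geq 3$ is a prime power and $n\geq 1$. So the only delicate part is arranging representability of the $\gamma$-generated subalgebras, and this is where the whole argument lives.

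The key idea will be that a $\gamma$-generated subalgebra of $\L(p,n)$ cannot ``see'' all $p+1$ of the diversity atoms $a_0,\dots,a_p$ separately when $p$ is large; it must lump most of them together into sums. More precisely, I would show that any subalgebra generated by $\gamma$ elements is contained in a subalgebra obtained by merging the $a_i$ into a bounded number of blocks, i.e.\ an algebra isomorphic to one of the form $\L^{ij}(p',n)$-type contractions, or directly to $\L(p',n)$ for some \emph{small} $p'$ bounded in terms of $\gamma$ and $n$. The merging of two atoms $a_i,a_j$ into $a_i+a_j$ is exactly the passage to $\L^{ij}(p,n)$, whose relative multiplication table (displayed just after Definition~\ref{4}) closes up on sums of atoms; iterating this collapses $\L(p,n)$ down to an $\L(p',n)$ with $p'$ small. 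The counting step is: a single generator, being a boolean combination of atoms, partitions the index set $\{0,\dots,p\}$ into at most a bounded number of classes, and $\gamma$ generators together induce a common refinement into at most some $f(\gamma)$ classes, so the subalgebra they generate lies inside the contraction to $\L(p',n)$ with $p'\leq f(\gamma)$ (independent of $p$).

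Once the subalgebra is identified with (a subalgebra of) $\L(p',n)$ for $p'$ bounded in terms of $\gamma$ and $n$, I would invoke Theorem~\ref{rep}: choose $p$ to be a prime power that is simultaneously large compared to $n$ in the sense needed for $\L(p,n)\notin\RRA$ (namely $2n>p$ fails the other way, so care is needed) --- here is the tension I must resolve. The obstacle is that Theorem~\ref{rep} gives representability of $\L(p,n)$ only when $p$ is \emph{large} compared to $n$, whereas non-representability (Corollary~\ref{notrap}) needs $2n>p$, i.e.\ $p$ \emph{small} compared to $n$. These cannot hold for the same $p$. The resolution is that I apply representability not to $\L(p,n)$ itself but to the small contraction $\L(p',n)$: I need the contracted parameter $p'$ to be large compared to $n$ while the ambient $p$ is small compared to $2n$. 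But $p'\leq p$, so this too seems impossible unless the generators are forced to merge the $t_k$'s as well, or unless the relevant small subalgebras embed (via Lemma~\ref{lem:embed}) into some $\L(q,n)$ with $q$ large that \emph{is} representable.

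The right manoeuvre, and the genuinely hard part, is therefore to decouple the parameters: given $\gamma$, first pick $n$ large enough that $f(\gamma)$ is comfortably below any threshold, then pick $p$ with $3\leq p$, $p$ a prime power, $2n>p$ (forcing $\L(p,n)\notin\RRA$ by Corollary~\ref{notrap}) while $\L(p,n)\in\wRRA$ by Theorem~\ref{weakrep}. For the subalgebras, I would show each $\gamma$-generated subalgebra is isomorphic to a subalgebra of some $\L^{ij}(p,n)$, hence by Lemma~\ref{lem:embed} embeds into $\L(q,n)$ for arbitrarily large prime-power $q$; choosing $q$ large compared to $n$ makes $\L(q,n)$ representable over a finite set by Theorem~\ref{rep}, and representability passes to subalgebras. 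Thus the representability of the small subalgebras is imported from a \emph{different, larger} algebra $\L(q,n)$ via the embedding of Lemma~\ref{lem:embed}, sidestepping the incompatibility of the two parameter regimes. The main obstacle to nail down rigorously is the combinatorial claim that $\gamma$ generators cannot distinguish more than a bounded number of the diversity atoms, so that the generated subalgebra really does sit inside a contraction with boundedly many $a$-blocks; establishing this refinement/counting bound, and checking it interacts correctly with the $t_k$ atoms, is the crux of the proof.
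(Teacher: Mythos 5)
Your proposal, after working through the tension between the two parameter regimes, lands on exactly the paper's argument: pick a prime power $p$ with $2^\gamma<p+1$ and $n>p/2$, use a pigeonhole on the $2^\gamma$ possible traces of the generators on the diversity atoms to find a pair $i,j$ such that the generated subalgebra sits inside $\L^{ij}(p,n)$, and then import representability from a larger $\L(q,n)$ via Lemma~\ref{lem:embed} and Theorem~\ref{rep}. The step you flag as the remaining crux is in fact the easy part: since $p+1>2^\gamma$, two atoms $a_i,a_j$ must lie below exactly the same generators, and merging that single pair suffices because Lemma~\ref{lem:embed} stretches $a_i+a_j$ across arbitrarily many atoms of $\L(q,n)$, so no bound on the number of blocks beyond finding one mergeable pair is needed.
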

\begin{proof}
  Pick any prime power $p$ such that $2^\gamma<p+1$ and pick $n>p/2$.
  Then $\L(p, n)$ is weakly representable by Theorem~\ref{weakrep},
  but not representable by Corollary~\ref{notrap}.  Let
  $\Gamma\subseteq\L(p,n)$ be a set of $\gamma$ generators.  The
  Boolean subalgebra generated by $\Gamma$ (the closure of $\Gamma$
  under intersection and complementation) has at most $2^\gamma$
  atoms. Not all of $a_0,\cdots,a_p$ are among them, because
  $2^\gamma<p+1$. There must be $i<j\leq p$ such that for each
  $g\in\Gamma$ either $a_i+a_j\leq g$ or $(a_i+a_j)\bp g=0$.  This
  implies that $\Gamma$ is a subset of the maximal subalgebra
  $\L^{ij}(p,n)$, because all of its elements are joins of atoms of
  $\L^{ij}(p,n)$.  The subalgebra of $\L(p,n)$ generated by $\Gamma$
  is thus a subalgebra of $\L^{ij}(p,n)$, which is, by
  Lemma~\ref{lem:embed}, (isomorphic to) a subalgebra of $\L(q,n)$ for
  every $q>p$. Choose $q$ so large compared to $n$ that, by
  Theorem~\ref{rep}, $\L(q,n)$ is representable over a finite set.
  Hence the subalgebra of $\L(p,n)$ generated by $\Gamma$ is
  representable over a finite set.
\end{proof}

\begin{proof}[Proof of Theorem \ref{MainThm}]
Suppose $\Sigma$ is a set of equations defining \RRA\ over \wRRA, \ie,
$\RRA=\wRRA\cap\text{Mod}(\Sigma)$.  Also, suppose for contradiction
that there is a finite $\gamma$ such that every equation
$\varepsilon\in\Sigma$ contains only variables from
$x_1,\dots,x_\gamma$.  Choose a large odd prime power $p>2^\gamma-1$ and let
$n=(p+1)/2$.

Since $\L(p,n)$ is not representable, but is weakly representable,
there is some equation $\varepsilon \in\Sigma$ that is not valid in
$\L(p,n)$.  By assumption, $\varepsilon$ contains at most $\gamma$
variables.  Consider an assignment
${}^\prime:\set{x_1,\dots,x_\gamma}\rightarrow\L(p,n)$ to the
variables, falsifying $\varepsilon$.  Let $\Sg(x_1',\dots,x_\gamma')$
be the subalgebra of $\L(p,n)$ generated by
$x_1',\dots,x_\gamma'$. Since each term using only variables
$\set{x_1,\dots,x_\gamma}$ evaluates under ${}^\prime$ to the same
thing in $\L(n,k)$ as in $\Sg(x_1',\dots,x_\gamma')$, this variable
assignment falsifies $\varepsilon$ in $\Sg(x_1',\dots,x_\gamma')$.
But by Theorem~\ref{lem:2}, $\Sg(x_1',\dots,x_\gamma')$ is
representable, yet it fails the equation $\varepsilon\in\Sigma$,
contradicting the assumption $\RRA=\wRRA\cap\text{Mod}(\Sigma)$.
\end{proof}

\section{Equational Complexity}

The following definition of equational complexity from \cite{McNSzeWil08} gives a sort of ``measure'' of non-finite-axiomatizability.

\begin{defn}
The \emph{length} of an equation is the total number of operation
symbols and variables appearing in the equation.  For example, the
length of $(x + y)\cdot z = x\cdot z + y\cdot z$  is 12.   

For a variety $\mathsf{V}$
of finite signature, the \emph{equational complexity} of $\mathsf{V}$ is
defined to be a function $\beta_\mathsf{V}$ where for a positive integer
$m$, $\beta_\mathsf{V}(m)$ is the least integer  such that for any
algebra $\c A$ of the similarity class of $\mathsf{V}$ with $|\c A|\leq m$, $\c A\in
\mathsf{V}$ iff $\c A$ satisfies all equations true in $\mathsf{V}$ of length at most
$\beta_\mathsf{V}(m)$.
More generally, given two varieties $\mathsf{W}\subseteq\mathsf{V}$, the  \emph{equational complexity} of $\W$ over $\V$ is the function  $\beta_{\W/\V}$  where for any positive integer $m$, \/ $\beta_{\W/\V}(m)$ is the least integer such that for any algebra $\c A\in\V$ with $|\c A|\leq m$,\/ $\c A\in\W$ iff $\c A$ satisfies all equations true in $\W$ of length at most $\beta_{\W/\V}(m)$.
\end{defn}
In \cite{Alm12AU}, a log-log lower bound was given for the equational
complexity function for \RRA.  (See also~\cite{McNSzeWil08}.)  Theorem~\ref{MainThm} implies that the equational complexity function of \RRA\ over \wRRA\ must be unbounded; below, we give an explicit lower bound, also log-log. 

\begin{thm}
Let $\beta=\beta_{\RRA/\wRRA}$ be the equational complexity function of \RRA\ over \wRRA.  Then for all $m\geq 2^7$,
\[ \beta(m)> \log_2(2\log_2(m)-5) -\log_23.\]
\end{thm}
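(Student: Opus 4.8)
The plan is to convert the machinery behind Theorem~\ref{MainThm} into a quantitative bound by producing, for each $m\ge 2^7$, a member of $\wRRA\setminus\RRA$ of size at most $m$ that agrees with $\RRA$ on all \emph{short} equations. The starting point is the defining property of $\beta=\beta_{\RRA/\wRRA}$: the length-$\beta(m)$ test decides membership in $\RRA$ among $\wRRA$-algebras of size at most $m$. Hence if $\c A\in\wRRA$, $|\c A|\le m$, and $\c A\notin\RRA$, then $\c A$ must violate some $\RRA$-true equation of length at most $\beta(m)$, so $\beta(m)$ is at least the length of the shortest $\RRA$-law failing in $\c A$. It therefore suffices to exhibit such an $\c A$ whose shortest failing $\RRA$-law is provably long.

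For the witness I would take $\c A=\L(p,n)$ with $p$ an odd prime power and $n=(p+1)/2$. Then $2n=p+1>p$, so $\L(p,n)\notin\RRA$ by Corollary~\ref{notrap}, while $\L(p,n)\in\wRRA$ by Theorem~\ref{weakrep}; and from the $p+n+2$ atoms of Definition~\ref{4} its size is $|\L(p,n)|=2^{p+n+2}=2^{(3p+5)/2}$. The crucial input is the consequence of Theorem~\ref{lem:2} already exploited in the proof of Theorem~\ref{MainThm}: whenever $2^\gamma<p+1$, every $\gamma$-generated subalgebra of $\L(p,n)$ is representable, so $\L(p,n)$ satisfies every $\RRA$-law in at most $\gamma$ variables (any assignment of those variables generates a representable subalgebra in which the law holds, and the two terms evaluate identically there and in $\L(p,n)$). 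Consequently any $\RRA$-law failing in $\L(p,n)$ uses at least $\gamma+1$ distinct variables. Bounding length below by variable content -- a term with $k$ distinct variables has at least $k$ variable-leaves joined by at least $k-1$ binary operations, so an equation in $\gamma+1$ variables has length at least $2\gamma$ -- gives that the shortest failing $\RRA$-law has length at least $2\gamma$, whence $\beta(m)\ge 2\gamma$.

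It remains to optimize $\gamma$ against the size budget and simplify. Writing $M=\log_2 m$, the constraint $2^{(3p+5)/2}\le m$ reads $p\le (2M-5)/3$, so admissible odd prime powers reach up to about $(2M-5)/3$. I would take $\gamma$ maximal subject to the existence of an odd prime power $p$ with $2^\gamma\le p\le (2M-5)/3$: by Bertrand's postulate there is a prime in $[2^\gamma,2^{\gamma+1}]$, supplying such a $p$ as soon as $2^{\gamma+1}\le (2M-5)/3$, i.e. $\gamma+1\le \log_2(2M-5)-\log_2 3$. For this maximal $\gamma$ one then has $\beta(m)\ge 2\gamma$, and since $\gamma$ is roughly $\log_2\frac{2M-5}{3}-1$ the bound $2\gamma$ comfortably exceeds $\log_2(2M-5)-\log_2 3$ once $\frac{2M-5}{3}>4$; the finitely many remaining small values (down to the endpoint $m=2^7$, where $\L(3,2)$ of Corollary~\ref{small} is the witness with $\gamma=1$ and $\beta(2^7)\ge 2>\log_2 3$) are checked directly.

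The main obstacle is precisely this final bookkeeping that pins the stated constant. The trivial estimate ``length $\ge$ number of variables'' yields only $\beta(m)>\gamma$, which is too weak to beat $\log_2(2M-5)-\log_2 3$ at large $m$ (for instance at $m=2^{100}$, where the best admissible odd prime power gives $\gamma=5$ but the target forces $\beta\ge 7$); so one genuinely needs the factor-two term-length estimate $\beta(m)\ge 2\gamma$, together with a choice of odd prime power $p$ that is large enough to fit the size budget yet keeps $\gamma=\lfloor\log_2 p\rfloor$ near $\log_2\frac{2M-5}{3}$. Reconciling the prime-power gaps from Bertrand's postulate, the floor in $\gamma$, and the closed form $\log_2(2M-5)-\log_2 3$ is where the care lies; the entire representation-theoretic content is already delivered by Corollary~\ref{notrap}, Theorem~\ref{weakrep}, and Theorem~\ref{lem:2}.
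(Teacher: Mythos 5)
Your overall strategy is the paper's: the witnesses are the algebras $\L(p,n)$ with $n\approx(p+1)/2$, the representability of $\gamma$-generated subalgebras when $2^\gamma<p+1$ (extracted from the proof of Theorem~\ref{lem:2}) shows that every $\RRA$-valid equation in at most $\gamma$ variables holds in $\L(p,n)$, and the size $2^{(3p+5)/2}$ is traded off against the number of variables a failing equation must contain. You diverge in the two conversion steps. The paper keeps $n=\rup{(p+1)/2}$ so that even prime powers are admissible, notes that a failing law must have at least $\lceil\log_2(p+1)\rceil\ge\log_2(p+1)$ variables, and uses only ``length $\ge$ number of variables,'' giving $\beta(|\L(p,n)|)\ge\log_2(p+1)$ with no factor of $2$; the stated constant then comes from fitting $p$ into the window $2^{\rup{(3p+5)/2}}\le m<2^{\rup{(3p+7)/2}}$. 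You instead restrict to odd prime powers, invoke Bertrand's postulate to control the gaps, and compensate for the resulting loss in $p$ with the sharper estimate ``length $\ge 2\gamma$'' (which is correct: a side with $k$ variable occurrences carries at least $k-1$ binary operation symbols, so the two sides together have length at least $2(\gamma+1)-2$).

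The genuine gap is in the range you dismiss as ``finitely many remaining small values checked directly.'' For $m\in(2^{8.5},\,2^{10})$ the only odd prime power $p$ with $2^{(3p+5)/2}\le m$ is $p=3$, so your method yields only $\gamma=1$ and $\beta(m)\ge 2$, while the right-hand side $\log_2\bigl((2\log_2 m-5)/3\bigr)$ exceeds $2$ throughout that range and reaches about $2.32$ just below $2^{10}$; since $\beta$ is integer-valued you would need $\beta(m)\ge 3$ there, and neither your witnesses nor your length estimate deliver it. So the check is not routine bookkeeping but needs an additional idea --- for instance admitting the even prime power $p=4$ with $n=\rup{(p+1)/2}=3$ (the algebra $\L(4,3)$ has size $2^9$, forces three variables, hence length at least $3$ already by the trivial estimate), or observing separately that the only $\RRA$-valid equations of length at most $2$ are trivial identities, whence $\beta(m)\ge 3$ for every $m\ge 2^7$. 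Your asymptotic argument is sound (for $L=\log_2((2\log_2 m-5)/3)$ one has $2(L-2)>L$ once $L>4$, i.e.\ roughly $m\ge 2^{27}$), and the intermediate ranges from $2^{10}$ up do work out with your estimate, but as written the proposal does not establish the inequality for all $m\ge 2^7$.
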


\begin{proof} 
From the proof of Theorem~\ref{lem:2}, we have that if $\A$ is a $\gamma$-generated subalgebra of $\L(p,\rup{\frac{p+1}2})$ with $\gamma<\log_2(p+1)$, then $\A$ is representable, hence $\L(p, \rup{\frac{p+1}2})$ satisfies all equations with $\gamma$ variables valid over representable algebras.  Since $\L(p, \rup{\frac{p+1}2})$ is not representable and $|\L(p,\rup{\frac{p+1}2})|=2^{2+p+\rup{\frac{p+1}2}}$, it follows that $\log_2(p+1)\leq\beta(2^{2+p+\rup{\frac{p+1}2}})=\beta(2^{\rup{\frac{3p+5}2}})$.  For any $m\geq 2^7 \;(=2^{\frac{3\times 3+5}2})$ we can find $p\geq 3$ such that $2^{\rup{\frac{3p+5}2}}\leq m< 2^{\rup{\frac{3p+7}2}}\leq 2^{\frac{3p+8}2}$.
Then \begin{equation} \label{eq:p}
 \frac{2\log_2(m)-8}{3}< p.  
\end{equation}
Adding one and then taking logs of both sides of (\ref{eq:p}) yields
\begin{align*}
\log_2\left(\frac{2\log_2(m)-5}{3}\right) &<\log_2(p+1)\\
&\leq\beta(2^{\rup{\frac{3p+5}2}})\\
&\leq \beta(m),
\end{align*}
where the last line follows from monotonicity of $\beta$.  Therefore 
\[
\beta(m) >\log_2(2\log_2(m)-5) - \log_23.
\]

\end{proof}


\section{Open Questions}
Naturally, it seems likely that any equational basis for \wRRA\
contains infinitely many variables.
\begin{prob}
  Does \wRRA\ have a finite-variable equational basis?
\end{prob}
The proof of Lemma~\ref{rep0}, essentially due to Roger Lyndon, shows  that $\L(m, 0)$ is representable whenever there is an affine plane of order $m$.  Furthermore, Lyndon proves the converse: if there is no affine plane of order $m$ then $\L(m, 0)$ is not representable. 

\begin{prob}
Is $\L(m, 0)$ weakly representable for all finite $m\geq 3$?
\end{prob}
If the answer to Problem 2 is ``Yes", that would give a cleaner proof
of the main result of the present paper.  If the answer is ``No", for infinitely many $m$, it would yield a negative answer to Problem 1.

\begin{prob}
Find a reasonable lower bound for the equational complexity
function for \wRRA.
\end{prob}

A Monk algebra is an algebra derived from $\Ek$ by splitting diversity atoms (see \cite{AndMadNem91}).  The algebras $\Ek$ for $1\leq k \leq 400$ were recently shown in \cite{AlmMan} to be representable (except possibly for $k=8,13$). Splitting can destroy representability, however, as in the present paper.  
\begin{prob}
Are all the Monk algebras weakly representable?
\end{prob}

It is known \cite{Jon91} and follows from Theorem~\ref{lem:2} that any equational theory defining \RRA\ must use infinitely many variables, but now consider arbitrary first order theories.
\begin{prob}
Is there a first order theory (necessarily infinite) that defines \RRA\ using only finitely many variables?  If so, how many variables are needed?
\end{prob}
Failing that:
\begin{prob}
Is there a first order theory  (necessarily infinite) that defines \RRA\ over \wRRA\ using only finitely many variables?
\end{prob}

\appendix
\section{Appendix: technical proofs of the inequalities in Lemma~\ref{lem:prob}.}
\begin{lem}\label{lem:appendix} Let $p\geq 3$ be a prime power, let $n\geq 1$ and let $d=p^{2m}, \; k=(p-1)^m$.
\begin{itemize}
\item
If $m>\log_p(16n^2)$ then $\left(\frac{n^2}{n^2-1}\right)^d>4n^2d(d-1)$ (condition \ref{ineq1} of Lemma~\ref{lem:prob}).
\item If $m>2\log_{p-1}(24n),\/ \frac13\log_{p-1}(4n(p+1))$ then $\left(\frac{n}{n-1}\right)^k>4(p+1)nd^2$ (condition \ref{ineq2} of Lemma~\ref{lem:prob}).
\end{itemize}

\end{lem}

\begin{proof}
We claim:
\begin{equation}\label{eq:claim} (x>(4a)^2 \wedge x>e^{\frac b a}) \; \Rightarrow \; x>a\log_e(x)+b.\tag{*}\end{equation}
The  condition $x>(4a)^2$ is equivalent to (**) $\log_e(x)>2\log_e(4a)$. Recall
that $e^y>y$ for all real $y$.  Hence
\begin{align*}
e^{\log_e(x)-\log_e(4a)} &>\log_e(x)-\log_e(4a)\\
\intertext{so, assuming the two conditions on the left hand side of \eqref{eq:claim},}
x&>4a(\log_e(x)-\log_e(4a))\\
&>4a\frac{\log_e(x)}2\;\;(\mbox{by (**)})\\
&=2a\log_e(x)\\
&>a\log_e(x)+b\;\;\;(\mbox{by second condition in \eqref{eq:claim}})
\end{align*}
proving \eqref{eq:claim}.

Note, for $\alpha>1$\begin{equation}\label{eq:alpha}
\log_e\left(\frac\alpha{\alpha-1}\right)>\frac1{2\alpha}.\tag{\dag}\end{equation}
Now for the first part of the Lemma, suppose $m>\log_p(16n^2)$.  Then $p^m>16n^2$ so $p^{2m}>(4\times 4n^2)^2 > 2n=e^{\frac{\log_e(4n^2)}2}$.  So by \eqref{eq:claim}, with $a=4n^2,\; b=2n^2\log_e(4n^2),\; x=p^{2m}$, since $
p^{2m}>(4\times4n^2)^2, e^{\frac{\log_e(4n^2)}{2}}$, we have
\[\begin{array}{crclr}
& p^{2m}&>&4n^2\log_e(p^{2m}) + 2n^2\log_e(4n^2)&\\
\Rightarrow& \frac{p^{2m}}{2n^2}&>&\log_e(4n^2)+2\log_e(p^{2m})&\\
\Rightarrow& p^{2m}\log_e\left(\frac{n^2}{n^2-1}\right) &>& \log_e(4n^2)+2\log_e(p^{2m})&\mbox{by }\eqref{eq:alpha}\\
\Rightarrow& \left(\frac{n^2}{n^2-1}\right)^d&=&
\left(\frac{n^2}{n^2-1}\right)^{p^{2m}}&\\
&&>&4n^2p^{2m}(p^{2m}-1)&\\
&&=&4n^2d(d-1)&
\end{array}
\]
which is Lemma~\ref{lem:prob} condition~\eqref{ineq1}.\\
Now, for the second part of this Lemma, suppose (i) $m>2\log_{p-1}(24n)$ and (ii) $m>\frac13\log_{p-1}(4n(p+1))$. 
 By (ii) we have $(p-1)^m>\;^3\!\sqrt{4n(p+1)}$ and by (i) we have $(p-1)^m>(24n)^2$.  So, by \eqref{eq:claim} with $x=(p-1)^m,\;a=6n, \; b=2n\log_e(4n(p+1))$, we get $(p-1)^m>6n\log_e((p-1)^m)+2n\log_e(4n(p+1))$. Thus
\begin{align*}
\frac{(p-1)^m}{2n}&>\log_e(4n(p+1))+3\log_e((p-1)^m)\\
&\geq\log_e(4n(p+1))+2\log_e(p^m)&&(p\geq 3). 
\end{align*}
Hence (by \eqref{eq:alpha}), we get
\begin{align*}
(p-1)^m\log_e\left(\frac{n}{n-1}\right)&>\log_e(4n(p+1))+\log_e(p^{2m})\\
\intertext{and so}
\left(\frac n {n-1}\right)^k&=\left(\frac{n}{n-1}\right)^{(p-1)^m}\\
&>4(p+1)np^{2m}\\
&=4(p+1)nd^2
\end{align*}
i.e.  Lemma~\ref{lem:prob} condition~\eqref{ineq2} holds.

\end{proof}
\bibliographystyle{plain}

\end{document}